\DeclareMathOperator*{\T}{T}
\DeclareMathOperator*{\NE}{ne}
\newcommand{\nephi}{\varphi_{\NE}}
\newcommand{\demph}[1]{\emph{#1}}
\title{Nonequational Stable Groups}
\author{Isabel M\"{u}ller \, 
and Rizos Sklinos}
\begin{document}
 \maketitle
\abstract{We introduce a combinatorial criterion for verifying whether a formula is not the conjunction of an equation and a co-equation. 
Using this, we give a proof for the nonequationality of the free group.

Furthermore, we generalize the latter result to the first-order theory of any free product of groups of the form $G*\F_{\omega}$.}

\section{Introduction}
The notion of equationality has been introduced by Srour \cite{MR2633841} and further developed by Pillay-Srour \cite{MR771800}. It is best understood intuitively 
as a notion of Noetherianity on instances of first-order formulas (see section \ref{sec:Criterion} for a formal definition). 
A first-order theory is equational when every first-order formula is equivalent to a boolean combination of equations.  
As it is often the case in model theory, equationality is modeled after a phenomenon in algebraically closed fields. There,  
every first-order formula is a boolean combination of varieties, i.e. closed sets in the Zariski topology, which in turn is Noetherian. 

The equationality of a first-order theory implies another fundamental property: any equational first-order theory is stable. 
Stability had been introduced by Shelah as the first dividing line in his classification program. A first-order theory is stable if it admits a nicely behaved independence relation. We note that, in general, 
having an abstract independence relation does not imply a notion of dimension that can be used to prove a descending chain condition 
as in the case of algebraically closed fields. Despite that, at the time equationality was introduced there was no known example of a stable nonequational theory. A few years later Hrushovski and Srour \cite{HrushovskiSrour} produced the first such "artificial" example by tweaking the free pseudospace, 
a structure introduced by Baudisch and Pillay which is $2$-ample but not $3$-ample. 

In 2006 Sela \cite{MR3034289} proved that torsion-free hyperbolic groups are stable. This 
is considered by many one of the deepest results in the model theory of groups. 
In addition, Sela proved, in a yet unpublished paper \cite{SelaEquational}, that torsion-free hyperbolic groups are 
nonequational, whence these theories are the first natural examples of stable nonequational theories. 
His proof of nonequationality relies on the heavy machinery introduced and used in a series of papers on the 
elementary theory of free groups culminating to the positive answer of Tarski's question on whether or not nonabelian free groups share the 
same first-order theory. 

The sophisticated methods introduced for tackling Tarski's problem also allowed Sela to answer a long standing question of Vaught . He proved, in a yet unpublished paper \cite{SelaFreeProducts},  
that whenever $G_1$ is elementarily equivalent to $G_2$ and $H_1$ is elementarily equivalent to $H_2$, then the free product $G_1*H_1$ 
is elementarily equivalent to $G_2*H_2$. In addition, he proved that whenever $G*H$ is a nontrivial free product, which is not $\Z_2*\Z_2$, 
then it is elementarily equivalent to $G*H*\F$ for any free group $\F$.

In this paper we give an elementary transparent proof for the nonequationality of the 
first-order theory of nonabelian free groups. We use (essentially) the same first-order formula as Sela, 
but our arguments avoid his complicated machinery. As a matter of fact, our proof generalizes to any free product of groups of the form $G*\F_{\omega}$. The main result of this paper is: 

\begin{thmIntro} 
The first-order theory of the free product of groups $G*\F_{\omega}$ is nonequational. 
\end{thmIntro}

Since $\F_{\omega}$ is a model of the theory of the free group (see \cite[Theorem 4]{MR2238945} and \cite{MR2293770}), we get as a corollary that the first-order theory of the free group is nonequational.

Taking into account the results in \cite{SelaFreeProducts}, our theorem implies: 

\begin{thmIntro}
Let $G_1*G_2$ be a nontrivial free product of groups, which is not $\Z_2*\Z_2$. Then its first-order theory is nonequational. 
\end{thmIntro}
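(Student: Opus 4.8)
The plan is to reduce the statement for an arbitrary free product to the case of a nonabelian free group, which is then handled by the combinatorial criterion of Section~\ref{sec:Criterion}. Since equationality is a property of the complete theory and not of a particular model, Sela's theorem that $G_1*G_2\equiv G_1*G_2*\F$ for any free group $\F$ (valid because $G_1*G_2$ is a nontrivial free product different from $\Z_2*\Z_2$) lets me replace $G_1*G_2$ by $G_1*G_2*\F$ with $\F$ nonabelian. The gain is decisive: in $G_1*G_2*\F$ the free group $\F$ is not merely a subgroup but a \emph{retract}, the retraction $\pi\colon G_1*G_2*\F\twoheadrightarrow\F$ being the identity on $\F$ and trivial on $G_1$ and $G_2$. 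A free subgroup of $G_1*G_2$ would not suffice for what follows, and this is exactly why the passage through Sela's elementary equivalence is needed.

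I would first establish the free group case. Fixing (essentially) Sela's formula $\varphi(x;\bar y)$, which is positive existential --- it asserts that a finite system of equations in $x,\bar y$ and auxiliary variables is solvable --- I would exhibit inside $\F$ a sequence $(\bar a_i)_{i<\omega}$ together with witnesses $b_n$ such that $b_n$ satisfies $\varphi(\,\cdot\,;\bar a_i)$ for all $i\le n$ but not $\varphi(\,\cdot\,;\bar a_{n+1})$. This makes the finite intersections $\bigcap_{i\le n}\varphi(\F;\bar a_i)$ strictly decreasing. The verification is purely combinatorial: working with normal forms of words in $\F$, one checks that each $b_n$ solves the required systems while the last system has no solution. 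Feeding this array into the criterion of Section~\ref{sec:Criterion} shows that $\varphi$ is not the conjunction of an equation and a co-equation, whence $\mathrm{Th}(\F)$ is nonequational.

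It remains to transport this witness into $G_1*G_2*\F$. I place the same sequence $(\bar a_i)$ and witnesses $b_n$ inside the free factor $\F$. The positive instances persist upward automatically, since a solvable system over $\F$ stays solvable over the larger group. The delicate point is the negative instances: one must rule out that adjoining $G_1$ and $G_2$ creates a \emph{new} solution making some $\varphi(b_n;\bar a_{n+1})$ true in $G_1*G_2*\F$, which would collapse the strictly descending chain. Here the retraction $\pi$ does the work: given a solution $\bar c\in G_1*G_2*\F$ to the system witnessing $\varphi(b_n;\bar a_{n+1})$, apply $\pi$; since $\pi$ fixes $b_n$ and $\bar a_{n+1}$ (they lie in $\F$) and carries word equations to word equations, $\pi(\bar c)$ is a solution inside $\F$, contradicting $\neg\varphi(b_n;\bar a_{n+1})$ in $\F$. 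Thus the entire configuration, and with it the hypotheses of the criterion, survive verbatim in $G_1*G_2*\F$.

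The main obstacle is the combinatorial core of the free group step: identifying the correct sequence $(\bar a_i)$ and witnesses and verifying through word and cancellation arguments that the instances of $\varphi$ behave as required. This is precisely the work that replaces Sela's heavy machinery by an elementary computation, and getting the normal-form bookkeeping to yield a genuinely non-stabilizing chain is where the real effort lies; by contrast, once $\varphi$ is arranged to be positive existential, the transfer to $G_1*G_2*\F$ via the retraction is soft. Combining the two steps, $\mathrm{Th}(G_1*G_2*\F)=\mathrm{Th}(G_1*G_2)$ is nonequational, as claimed.
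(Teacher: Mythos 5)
There is a genuine gap, and it sits exactly at the step you declare ``soft''. Your transfer argument requires the formula to be positive existential, but it is not: Sela's formula $\phi_S(x,y):=\exists u,v\,([u,v]\neq 1\wedge yx=u^{10}v^{-9})$ (and likewise the paper's $\nephi$) contains the inequation $[u,v]\neq 1$, and the retraction $\pi\colon G_1*G_2*\F\twoheadrightarrow\F$ preserves word equations but \emph{not} inequations. Concretely, if $(u,v)$ witnesses the existential formula in $G_1*G_2*\F$, the images $\pi(u),\pi(v)$ may commute even though $u,v$ do not: take $u=ge_1$ and $v=e_1g$ with $1\neq g\in G_1$, whose normal forms show $[u,v]\neq 1$ while both retract to $e_1$. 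Nor does an indirect argument rescue you: commuting images lie in a common cyclic subgroup $\langle w\rangle$ of $\F$, and an identity such as $w^{5p+4q}=e_1$ is perfectly solvable there (e.g.\ $w=e_1$, $p=1$, $q=-1$), so no contradiction with the free-group case arises. Ruling out such mixed witnesses --- noncommuting $u,v\in G*\F_\omega$ with $u^5v^4=e_1$ --- is precisely what the universally quantified instances demand, with $u,v$ ranging over the \emph{whole} free product, and it is the hard content of the paper: Lemma \ref{lem:phigeneric2} occupies all of Section 4 and is proved by analysing the action on the Bass-Serre tree of the splitting $G*\F_\omega$ (reduction to the case where one witness is cyclically reduced, then a comparison of vertex labels along the axis of $v$ forcing $a_n=a_n^{-1}$). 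Your proposal inverts the actual distribution of difficulty: the upward transfer is where the theorem is really proved, and the retraction does not do it.

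A secondary inaccuracy: the configuration you describe for the free-group step --- a single sequence $(\bar a_i)$ with witnesses $b_n$ making $\bigcap_{i\le n}\varphi(\F;\bar a_i)$ strictly decreasing --- only shows that $\varphi$ is not an \emph{equation} (Fact \ref{fact:noequation}); that is far weaker than nonequationality of the theory, as $x\neq y$ witnesses in the equational theory of pure equality. The criterion of Section \ref{sec:Criterion} needs a genuinely two-dimensional array: matrices $A_n,B_n$ with $\models\varphi(a_{ij},b_{kl})$ if and only if $i\neq k$ or $(i,j)=(k,l)$, \emph{together with} the homogeneity hypothesis that all such pairs realize one type. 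The rows defeat a candidate equation $\psi_1$, the diagonal-versus-row pattern defeats the co-equation $\psi_2$, and the type condition handles the disjunctions in a disjunctive normal form; without all three ingredients one cannot exclude arbitrary boolean combinations of equations. In the paper this is realized by the explicit matrices with entries $e_{i+j}^5e_i$ and $e_k^{-1}e_{k+l}^{-4}$, whose good pairs are parts of bases (so realize $\tp(e_1,e_2)$) and whose same-row off-diagonal products $e_{i+j}^5e_{i+l}^{-4}$ supply the witnesses for $\neg\nephi$; you would need to set up this array and verify its universal instances inside $G*\F_\omega$ by the Bass-Serre argument before invoking Proposition \ref{prop:criterion}.
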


Sela, in the work mentioned above \cite{SelaFreeProducts}, proved that a free product of stable groups remains stable.
This fact together with the main result of this paper give an abundance of new 
stable nonequational theories. 

%

\section{The Criterion}\label{sec:Criterion}

In the following section we will introduce the notion of equationality and present a criterion which implies 
that a formula is not the conjunction of an equation and a co-equation. We furthermore will argue that under some 
slightly stricter conditions, this amounts to show that a first-order theory is nonequational. 
\ \\ \\
{\bf Conventions:} In this paper a first-order theory is always assumed to be complete. In addition, all parameters are coming 
from a fixed saturated model $\mathbb{M}$, the "monster model", unless otherwise specified. The notation "$\models$" will be used for "$\mathbb{M}\models$".
\ \\ \\
\begin{definition}\label{def:equationalityformula}
Let $T$ be a first-order theory. Then a formula $\varphi(x, y)$ is an \demph{equation} in the tuple $x$ if any 
collection of instances of $\varphi(x, y)$ is equivalent (modulo $T$) to a finite subcollection.   
\end{definition}

An easy example of an equation in an arbitrary theory $T$ is an actual equation, i.e. a formula of the type $x=y$. 
Note that being an equation is not closed under boolean combinations, as the formula $x\neq y$ is not an equation in any 
theory $T$ with infinite models.

\begin{definition}\label{def:equationalitytheory}
A first-order theory $T$ is $n$\demph{-equational} if every formula $\varphi(x, y)$ with variable length $\abs{x}=n$ 
is a boolean combination of equations. 
Moreover, the first-order theory $T$ is \demph{equational} if it is $n$-equational for all $n<\omega$.
\end{definition}

It remains an interesting open question, whether being $1$-equational implies being equational. 
All known examples of stable nonequational theories, i.e. the examples in this paper and Hrushovski's tweaked free pseudospace, 
are in fact not $1$-equational.

The following fact gives a more combinatorial flavor to the definition of equationality for a formula.

\begin{fact}\label{fact:noequation}
Let $T$ be a first-order theory. Then a first-order formula $\varphi(x,y)$ is {\bf not} an equation (in $T$) if and only if  for arbitrarily large $n\in\N$, there are tuples $(a_i)_{i\leq n}, (b_j)_{j\leq n}$ such that 
$\models\varphi(a_i,b_j)$ for all $i<j$, but $\not\models\varphi(a_i,b_i)$.
\end{fact}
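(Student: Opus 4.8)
The plan is to prove both implications of the equivalence by reformulating equationality of $\varphi(x,y)$ as a descending chain condition on finite intersections of instances. Writing $\varphi(M,b)=\{a : \models\varphi(a,b)\}$ for an instance, I first record that $\varphi$ is an equation exactly when, for every choice of parameters, the family of finite intersections $\bigcap_{j\in F}\varphi(M,b_j)$ admits a least element, i.e. satisfies the descending chain condition: a collection of instances being equivalent to a finite subcollection is literally the assertion that its finite intersections attain a minimum, and conversely the failure of the descending chain condition produces, by discarding redundant instances greedily, a collection with no least finite intersection. Throughout I work inside a monster model of $T$, so that all parameters and witnesses live in one structure and ``equivalent modulo $T$'' is unambiguous.

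For the direction assuming the combinatorial configuration, suppose that for arbitrarily large $n$ there are tuples $(a_i)_{i\le n},(b_j)_{j\le n}$ with $\models\varphi(a_i,b_j)$ for $i<j$ and $\not\models\varphi(a_i,b_i)$. Reversing the order of the indices (relabelling $a_i,b_j$ as $a_{n-i},b_{n-j}$) turns these into configurations with $\models\varphi(a_i,b_j)$ for $i>j$ and $\not\models\varphi(a_i,b_i)$, which say precisely that the finite chain $C_k=\bigcap_{l\le k}\varphi(M,b_l)$ drops strictly at every step $k\le n$, the element $a_k$ lying in $C_{k-1}$ but not in $C_k$. Since such strictly descending chains exist of every finite length, a compactness argument — realizing the type asserting the existence of an infinite sequence of instances together with witnesses $a_k\in\bigcap_{l<k}\varphi(M,b_l)\setminus\varphi(M,b_k)$ — yields a single infinite strictly descending chain. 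The associated infinite collection of instances is then not equivalent to any finite subcollection, so $\varphi$ is not an equation.

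For the converse, if $\varphi$ is not an equation then by the reformulation some collection $\{\varphi(x,b_j)\}_j$ has no least finite intersection, and the greedy construction produces an infinite strictly descending chain $\bigcap_{l<k}\varphi(M,b_l)$; at the $k$-th strict drop I choose $a_k$ with $\models\varphi(a_k,b_l)$ for all $l<k$ and $\not\models\varphi(a_k,b_k)$. For any fixed $n$ the tuples $(a_k)_{k\le n},(b_l)_{l\le n}$ then exhibit $\models\varphi(a_i,b_j)$ for $j<i$ together with $\not\models\varphi(a_i,b_i)$, and reversing the indices once more converts this into the stated configuration with $i<j$. As $n$ was arbitrary, configurations of unbounded size are obtained.

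The only genuinely delicate points are bookkeeping equivalences rather than any hard mathematics: first, that the definitional notion of being an equation coincides with the descending chain condition on finite intersections of instances, and second, the compactness step upgrading strictly descending chains of every finite length to one infinite descending chain. I expect the latter to demand the most care, since it must be cast as the finite satisfiability of an explicit type over the monster model; keeping every configuration finite until that step is invoked is exactly what avoids having to control the unspecified entries $\varphi(a_i,b_j)$ with $i>j$, which the statement leaves free.
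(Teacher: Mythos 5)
Your proof is correct; note that the paper itself states this Fact without proof, treating it as folklore, so there is no in-paper argument to compare against. Your route --- reformulating equationality as the existence of a least finite intersection for every family of instances, then a greedy construction in one direction and compactness in the other --- is precisely the standard argument the authors implicitly invoke, and your handling of the index reversal (the Fact's configuration has the triangle above the diagonal, while the chain formulation wants it below) and of the unconstrained entries with $i>j$ is careful and correct. Two points deserve tightening. First, your parenthetical ``admits a least element, i.e.\ satisfies the descending chain condition'' is not an equivalence for a \emph{single} family: a meet-closed family of sets can have a least element and still contain infinite strictly descending chains (e.g.\ the full power set of $\N$). What makes your identification legitimate is the universal quantifier over parameter families: given an infinite strictly descending chain $C_k=\bigcap_{l\le k}\varphi(M,b_l)$, the collection $\{\varphi(x,b_l)\}_{l<\omega}$ itself has no least finite intersection, since a least one, $\bigcap_{l\in F}\varphi(M,b_l)$, would equal the total intersection while containing $C_{\max F}\supsetneq C_{\max F+1}\supseteq\bigcap_{l<\omega}\varphi(M,b_l)$, a contradiction; conversely, under the chain condition, meet-closedness upgrades a minimal finite intersection to a least one. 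Spelling this out would replace the only genuinely loose step. Second, you are right to carry the witnesses $a_k$ inside the type in the compactness step: strictness of the drops $C_{k-1}\supsetneq C_k$ is not expressible level-by-level without naming the elements $a_k\in C_{k-1}\setminus C_k$, and it is exactly these witnesses that afterwards separate $\bigcap_{l\in F}\varphi(M,b_l)$ from the total intersection for every finite $F$, refuting equivalence to any finite subcollection. With the first point repaired as indicated, the proof is complete.
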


The following remark will have interesting consequences in our study of nonequational theories.

\begin{remark}\label{rem:dnf}
Assume $\varphi(x,y)$ to be a formula equivalent to a boolean combination of equations. Then $\varphi(x,y)$ is equivalent to a formula of the form
   \begin{equation*}
   \bigvee_{0\leq i\leq n}(\psi_1^i(x,y)\wedge \neg\psi_2^i(x,y)),
   \end{equation*}
for some equations $\psi_1^i, \psi_2^i$ and $n\in \N$. This follows easily from the facts that every formula is equivalent to a formula in disjunctive normal form and that finite disjunctions and finite conjunctions of equations are again equations (see \cite[Lemma 2.8]{MR771800}). 
\end{remark}

In the following we will give a combinatorial criterion for formulas to not be of the form 
$\psi_1(x,y)\wedge \neg\psi_2(x,y)$, where $\psi_1$ and $\psi_2$ are equations. 

\begin{lemma}\label{lem:criterion}
Let $\T$ be a first-order theory and $\varphi(x,y)$ be a formula. 
If for arbitrary large $n\in\N$ there exist matrices $A_n:=(a_{ij})_{i,j\leq n}$ and $B_n:=(b_{kl})_{k,l\leq n}$ such that 
\begin{equation*}\label{eq:necondition}
  \models \varphi(a_{ij},b_{kl}) \text{ if and only if }i\neq k\text{ or }(i,j)=(k,l),
 \end{equation*}
then $\varphi(x,y)$ is not equivalent to a formula of the form $\psi_1(x,y)\wedge \neg\psi_2(x,y)$, where $\psi_1$ and $\psi_2$ are equations. 
\end{lemma}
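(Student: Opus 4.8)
The plan is to argue by contradiction. Suppose $\varphi(x,y)$ were equivalent modulo $\T$ to $\psi_1(x,y)\wedge\neg\psi_2(x,y)$ for equations $\psi_1,\psi_2$. The first step is to read off, on each of the arrays $A_n,B_n$ provided by the hypothesis, what this equivalence forces. Wherever $\varphi$ holds, i.e. whenever $i\neq k$ or $(i,j)=(k,l)$, we must have $\models\psi_1(a_{ij},b_{kl})$ and $\not\models\psi_2(a_{ij},b_{kl})$; and wherever $\varphi$ fails, i.e. whenever $i=k$ but $j\neq l$, we must have $\not\models\psi_1(a_{ij},b_{kl})$ or $\models\psi_2(a_{ij},b_{kl})$. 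The whole argument then consists in turning these constraints into one of the triangular configurations forbidden by Fact~\ref{fact:noequation}, applied either to $\psi_1$ or to $\psi_2$, both of which are assumed to be equations.

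The key device is to track the rows that misbehave for $\psi_1$. Call the $i$-th row of a given array \emph{bad} if some off-diagonal entry $j\neq l$ in it satisfies $\not\models\psi_1(a_{ij},b_{il})$. I would then split according to whether the number of bad rows stays bounded as $n$ ranges over the infinite set of sizes for which the arrays exist. If it is unbounded, I witness that $\psi_1$ is not an equation: choose $m$ distinct bad rows $i_1,\dots,i_m$ and in each one an off-diagonal pair $(j_r,l_r)$ with $\not\models\psi_1(a_{i_r j_r},b_{i_r l_r})$, and set $\alpha_r:=a_{i_r j_r}$, $\beta_r:=b_{i_r l_r}$. By the choice of these pairs $\psi_1$ fails on the diagonal, while for $r\neq s$ the two coordinates lie in distinct rows ($i_r\neq i_s$), so $\varphi$ holds there and hence so does $\psi_1$. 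Thus $\models\psi_1(\alpha_r,\beta_s)$ for $r<s$ and $\not\models\psi_1(\alpha_r,\beta_r)$, which as $m\to\infty$ contradicts Fact~\ref{fact:noequation} for $\psi_1$.

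If instead the number of bad rows is bounded, then for all large $n$ some row $i$ is non-bad, and I use it to witness that $\psi_2$ is not an equation. In a non-bad row $\psi_1$ holds at every off-diagonal entry, so the constraint coming from the failure of $\varphi$ forces $\models\psi_2(a_{ij},b_{il})$ for every $j\neq l$, while on the diagonal $\varphi$ holds and forces $\not\models\psi_2(a_{ij},b_{ij})$. Setting $\alpha_j:=a_{ij}$ and $\beta_j:=b_{ij}$ for $j\leq n$ therefore yields $\models\psi_2(\alpha_j,\beta_l)$ for all $j<l$ and $\not\models\psi_2(\alpha_j,\beta_j)$, so letting $n\to\infty$ contradicts Fact~\ref{fact:noequation} for $\psi_2$. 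In either case we reach a contradiction, proving the lemma.

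I expect the main point to be conceptual rather than computational: it lies in the asymmetry between the two cases. The anti-diagonal pattern for $\psi_2$ can be extracted inside a single well-behaved row, but a single row need not contain more than one $\psi_1$-failure, so the witness for $\psi_1$ must be assembled across many rows and relies essentially on $\varphi$ (hence $\psi_1$) holding on all cross-row pairs. The points that need care are choosing the bounded/unbounded dichotomy so that both cases genuinely produce witnesses of unbounded size, and matching the orientation of each triangular array exactly to the one demanded by Fact~\ref{fact:noequation}.
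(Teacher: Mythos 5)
Your proof is correct and takes essentially the same route as the paper: the same dichotomy on rows according to whether $\psi_1$ fails somewhere in them, extracting the triangular pattern for $\psi_2$ inside a row where $\psi_1$ holds throughout, and assembling cross-row diagonal witnesses (where $\varphi$, hence $\psi_1$, holds off the diagonal) to contradict $\psi_1$ being an equation. The only cosmetic difference is that the paper works inside a single sufficiently large array --- either some row satisfies $\psi_1$ at all entries or \emph{every} row contains a $\psi_1$-failure --- which renders your bounded/unbounded bookkeeping across the family of arrays unnecessary.
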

\begin{proof}
 We first show that under the hypothesis of the lemma, every row witnesses that the formula $\neg\varphi(x,y)$ is not equivalent to an equation. To see that, fix some $i_0$ and note that with $a_j:=a_{i_0j}$ and $b_l:=b_{i_0l}$, we get 
 \begin{equation*}
  \models\neg\varphi(a_j,b_l) \text{ for all }j<l\text{ and }\not\models\neg\varphi(a_j,b_j),
 \end{equation*}
whence by Fact \ref{fact:noequation} the formula $\neg\varphi(x,y)$ is not equivalent to an equation. 

Now, aiming for a contradiction, assume $\varphi(x,y)\equiv \psi_1(x,y)\wedge\neg\psi_2(x,y)$, for some equations 
$\psi_1,\psi_2$, whence in particular $\neg\varphi(x,y)\equiv \neg\psi_1(x,y)\vee\psi_2(x,y)$. 
If there was some index $i_0$, such that $\models\psi_1(a_{i_0j},b_{i_0l})$ for all $j,l$, 
then for that row we would have $\neg\varphi(a_{i_0j},b_{i_0l})\leftrightarrow \psi_2(a_{i_0j},b_{i_0l})$ for all $j,l\leq n$, 
contradicting the fact that $\psi_2$ is an equation. Thus, for any index $i\leq n$, there exists some $j_i,l_i$ such that 
$\models\neg\psi_1(a_{ij_i},b_{il_i})$. Set $a_i:=a_{ij_i}, b_k:=b_{kl_k}$. Note that for $i\neq k$ we have $\models\varphi(a_i,b_k)$, whence 

\begin{equation*}
 \models\psi_1(a_i,b_k)\text{ for all }i<k\text{ and }\not\models\psi_1(a_i,b_i),
\end{equation*}
contradicting the fact that $\psi_1$ is an equation.
\end{proof}

Our method provides a general criterion for proving that a first-order theory is nonequational, given in the following proposition. There, we will 
use the notion of a \demph{type} which is a maximally consistent set of formulas. We remark that two tuples in the same orbit under the automorphism group of the structure have the same type. For the applications of this paper, in Sections \ref{sec:freegroup} and \ref{sec:FreeProducts}, we will actually use elements in the same automorphism orbit, hence the reader unfamiliar with the notion of "having the same type" can, on first reading, substitute it with the stronger notion of "being in the same automorphism orbit".   

\begin{proposition}\label{prop:criterion}
Let $T$ be a first-order theory. Suppose there exist a formula $\varphi(x,y)$ and arbitrarily large matrices 
$A_n, B_n$ such that: 
\begin{itemize}
 \item[(i)] there exists a type of $T$ which is satisfied by any tuple $(a_{ij},b_{kl})$ of entries of the matrices for $i\neq k$ and for $(i,j)=(k,l)$;
 \item[(ii)] the formula $\varphi(x,y)$ is satisfied by $(a_{ij},b_{kl})$ if and only if $i\neq k$ or $(i,j)=(k,l)$.
\end{itemize}
Then $T$ is nonequational.
\end{proposition}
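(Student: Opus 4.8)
The plan is to argue by contradiction. Assume $T$ is equational; then the formula $\varphi(x,y)$ supplied by the hypotheses is a boolean combination of equations, so by Remark~\ref{rem:dnf} we may write
\[
\varphi(x,y)\equiv\bigvee_{0\leq i\leq m}\bigl(\psi_1^i(x,y)\wedge\neg\psi_2^i(x,y)\bigr)
\]
for finitely many equations $\psi_1^i,\psi_2^i$, where $m$ is fixed independently of the size of the matrices. Let $p$ be the type provided by hypothesis~(i), namely the type realised by every \emph{relevant} entry pair $(a_{ij},b_{kl})$ --- those with $i\neq k$ or $(i,j)=(k,l)$. By hypothesis~(ii) all such pairs satisfy $\varphi$, and therefore $\varphi\in p$.

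The key step is to collapse the disjunction to a single disjunct using completeness of $p$. Since $\varphi\in p$ and $\varphi$ is equivalent modulo $T$ to the displayed finite disjunction, that disjunction lies in $p$; were none of the disjuncts in $p$, completeness would put the negation of each in $p$ and hence their finite conjunction, contradicting $\varphi\in p$. Fix therefore an index $i_0$ with $\chi_{i_0}:=\psi_1^{i_0}\wedge\neg\psi_2^{i_0}\in p$. I would then verify that this single $\chi_{i_0}$ reproduces the matrix pattern of Lemma~\ref{lem:criterion}: every relevant pair realises $p$ and thus satisfies $\chi_{i_0}$, while for every non-relevant pair ($i=k$ with $(i,j)\neq(k,l)$) hypothesis~(ii) gives $\not\models\varphi(a_{ij},b_{kl})$, and since $\chi_{i_0}$ implies $\varphi$ modulo $T$ we also get $\not\models\chi_{i_0}(a_{ij},b_{kl})$. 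Hence $\models\chi_{i_0}(a_{ij},b_{kl})$ if and only if $i\neq k$ or $(i,j)=(k,l)$, for arbitrarily large matrices. But $\chi_{i_0}$ is by construction a conjunction of an equation and a co-equation, so Lemma~\ref{lem:criterion} applied to $\chi_{i_0}$ delivers the contradiction, and $T$ is nonequational.

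The step I expect to be the most delicate is the reduction to a single disjunct. It relies on reading the implication $\chi_{i_0}\to\varphi$ modulo $T$ and on using completeness of a type for a \emph{finite} disjunction, both of which must be handled with care. A further point to secure is the uniformity in the matrix size: if hypothesis~(i) were only to furnish a separate type $p_N$ for each size $N$, the chosen index $i_0$ could a priori depend on $N$, but since there are only $m+1$ disjuncts while $N$ runs over arbitrarily large values, a pigeonhole argument still isolates one $i_0$ valid for arbitrarily large $N$ --- which is exactly what Lemma~\ref{lem:criterion} requires.
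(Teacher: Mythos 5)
Your proposal is correct and follows essentially the same route as the paper: reduce to the disjunctive normal form of Remark~\ref{rem:dnf}, isolate a single disjunct $\psi_1^{i_0}\wedge\neg\psi_2^{i_0}$ satisfied by all relevant pairs, and feed the resulting matrix pattern into Lemma~\ref{lem:criterion}. The only cosmetic difference is that you select the disjunct by completeness of the type $p$, while the paper selects it by evaluating at the concrete pair $(a_{11},b_{11})$ and transporting via the common type; your closing pigeonhole remark about size-dependent types is a sound (and slightly more careful) treatment of a uniformity point the paper leaves implicit.
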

\begin{proof}
 We will show that $\varphi(x,y)$ is not in the boolean algebra generated by equations. Otherwise, by Remark \ref{rem:dnf}, 
 there existed $m\in \N$ and equations 
 $\psi_1^i(x,y),\psi_2^i(x,y)$ for $i\leq m$ such that 
 
\begin{equation*}
\varphi(x,y)\equiv \bigvee_{0\leq i\leq m}(\psi_1^i(x,y)\wedge \neg\psi_2^i(x,y)).
\end{equation*}

By the hypothesis, for any $n$, we have $\models\varphi(a_{11},b_{11})$, where $a_{11}=A_n(1,1)$ and $b_{11}=B_n(1,1)$ (to be formal $a_{11}=a_{11}(n)$ and $b_{11}=b_{11}(n)$, but for notational simplicity we drop the $n$). 
Thus, there exists some 
$i\leq m$ such that $\models\psi^i_1(a_{11},b_{11})\wedge\neg\psi^i_2(a_{11},b_{11})$ for infinitely many $n$. 

We set 
$\theta(x,y):=\psi^i_1(x,y)\wedge\neg\psi^i_2(x,y)$. 
As all $(a_{ij},b_{kl})$ for $(i,j)=(k,l)$ or $i\neq k$ have the same type as $(a_{11},b_{11})$, we get that 
$\models\theta(a_{ij},b_{kl})$ for all previously mentioned indices (and arbitrarily large $n$). 
On the other hand, if $i=k$, but $j\neq l$, then $\models \neg\varphi(a_{ij},b_{kl})$, 
whence in particular $\models\neg \theta(a_{ij},b_{kl})$ (and all $n$). By Lemma \ref{lem:criterion}, 
this contradicts the fact that $\theta(x,y)$ is the conjunction of an equation and a co-equation. 
\end{proof}

\section{Nonequationality of the Free Group}\label{sec:freegroup}

In the following we will show that the theory of the free group is not $1$-equational, 
and hence not equational. The next result (see \cite[Theorem 4]{MR2238945} and \cite{MR2293770}) allows us to work in $\F_{\omega}$, the free group of rank $\omega$. 


\begin{fact} 
The following chain of free groups under the natural embeddings is elementary.
$$\F_2\subset \F_3\subset\ldots\subset\F_n\subset\ldots$$
\end{fact}

Working in $\F_{\omega}$, for which we fix a basis $\{e_1, e_2, \ldots, e_n, \ldots\}$, we will show that 
the following formula is not equivalent to a boolean combination of equations:

\begin{equation*}\label{eq:neformula}
\nephi(x,y):= \forall u,v ([u,v]\neq 1\rightarrow xy\neq u^5v^4).
\end{equation*}

Our formula is a variation of the formula Sela uses: 
\begin{equation*}\label{eq:Seformula}
\phi_S(x,y):= \exists u,v ([u,v]\neq 1\land yx= u^{10}v^{-9}). 
\end{equation*}

We recall some useful group theoretic facts. An element of a free group is called {\em primitive}  
if it is part of some basis of the free group. 

\begin{fact}\label{primitive}
Let $a$ be a primitive element of $\mathbb{F}$. Suppose $a$ belongs to a subgroup  $H$ of $\mathbb{F}$. 
Then $a$ is a primitive element of $H$.    
\end{fact}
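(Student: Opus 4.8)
The plan is to translate primitivity into the language of free factors and then apply the Kurosh subgroup theorem. Recall that in a free group an element is primitive precisely when it generates an infinite cyclic free factor: if $a$ lies in a basis $\{a\}\cup X'$ then $\mathbb{F}=\langle a\rangle * \langle X'\rangle$ exhibits $\langle a\rangle$ as a free factor, and conversely a generator of a free factor isomorphic to $\mathbb{Z}$ extends to a basis. Since $a$ is part of a basis it is in particular nontrivial of infinite order, so $\langle a\rangle\cong\mathbb{Z}$. Thus it suffices to show that $\langle a\rangle$, which by hypothesis is a free factor of $\mathbb{F}$, remains a free factor of the intermediate subgroup $\langle a\rangle\le H\le\mathbb{F}$.

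First I would fix a decomposition $\mathbb{F}=\langle a\rangle * \mathbb{F}'$, where $\mathbb{F}'$ is free on a basis of $\mathbb{F}$ omitting $a$, and then apply the Kurosh subgroup theorem to $H$ inside this free product. It decomposes $H$ as a free product of a free group with factors of the form $H\cap g\langle a\rangle g^{-1}$ and $H\cap g\mathbb{F}'g^{-1}$, where $g$ ranges over suitable double coset representatives. Because $a\in H$, the trivial double coset $H\cdot 1\cdot\langle a\rangle=H$ contributes the factor $H\cap\langle a\rangle=\langle a\rangle$ itself. Hence $\langle a\rangle$ occurs as a free factor in the Kurosh decomposition of $H$, so $H=\langle a\rangle * H''$ for some subgroup $H''$, and therefore $a$ is primitive in $H$.

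The conceptual point, and the step where a naive argument breaks down, is that one cannot simply restrict the retraction $\rho\colon\mathbb{F}\to\langle a\rangle$ (killing $\mathbb{F}'$ and fixing $a$) to $H$ and read off the conclusion. Restriction only yields a split surjection $\rho|_H\colon H\to\langle a\rangle$, equivalently a semidirect product decomposition $H=\ker(\rho|_H)\rtimes\langle a\rangle$, and such a splitting is in general not a free product: the conjugation action of $a$ on $\ker(\rho|_H)$ need not be trivial. The genuine free-factor structure is therefore invisible to the retraction alone, and one really needs a tool sensitive to the free-product structure rather than merely the extension structure, namely the Kurosh subgroup theorem.

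Alternatively, one can give a self-contained topological proof that avoids quoting Kurosh. Realize $\mathbb{F}=\pi_1(X)$ for $X=C_a\vee R'$, a wedge of a single circle $C_a$ representing $a$ with a rose $R'$ for $\mathbb{F}'$, and let $\widehat{X}\to X$ be the covering corresponding to $H$. Because $a\in H$, the circle $C_a$ lifts to a loop-edge $\widehat{C}_a$ based at the basepoint of $\widehat{X}$. Any spanning tree of $\widehat{X}$ necessarily omits this loop, so collapsing the tree exhibits $\widehat{C}_a=a$ as one of the free generators of $H=\pi_1(\widehat{X})$, displaying $a$ as primitive directly. The only care needed is to confirm that the omitted edge yields $a$ on the nose rather than a conjugate, which is immediate here since $\widehat{C}_a$ is a loop through the basepoint and the associated tree paths are trivial.
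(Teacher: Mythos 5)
Your Kurosh argument is correct and is exactly the route the paper takes (the paper gives no written proof, only the remark that the fact is a corollary of the Kurosh Subgroup Theorem): translating primitivity into $\langle a\rangle$ being a free factor and then extracting the factor $H\cap\langle a\rangle=\langle a\rangle$ from the trivial double coset is precisely the intended argument, correctly carried out. Your self-contained covering-space proof is a valid alternative as well, and the remark about why the restricted retraction $\rho|_H\colon H\to\langle a\rangle$ alone does not suffice is accurate.
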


The above fact can be obtained as a corollary of the Kurosh Subgroup Theorem \cite[Chapter IV, Theorem 1.10]{MR1812024}). We also recall an easy corollary of \cite[Chapter I, Proposition 4.17]{MR1812024}):

\begin{fact}\label{nonprimitive}
Let $e_1,\ldots, e_n$ be a basis of the free group $\F_n$ of rank $n$. Then, for any $|m_i|\neq 1$, $e_1^{m_1}\cdot e_2^{m_2}\ldots e_n^{m_n}$ is not a primitive element. 
\end{fact}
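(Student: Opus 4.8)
The plan is to prove non-primitivity through Whitehead's algorithm, by showing that under the hypothesis the word is already of minimal length in its automorphism orbit while having length strictly bigger than one; since a primitive element is $\Aut(\F_n)$-equivalent to a single generator, which has length $1$, this forces non-primitivity. Write $w:=e_1^{m_1}\cdots e_n^{m_n}$ and $\epsilon_i:=\mathrm{sign}(m_i)$. First I would dispose of the degenerate cases. If $m_i=0$ for some $i$, then $w$ lies in the free factor generated by those $e_j$ with $m_j\neq 0$; since by Fact \ref{primitive} a primitive element of $\F_n$ lying in a subgroup is primitive in that subgroup, it suffices to treat $w$ inside this smaller free factor. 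Hence I may assume every $\abs{m_i}\geq 2$. The case $n=1$ is immediate, as $e_1^{m}$ is primitive only for $m=\pm1$, so assume $n\geq 2$; then $w$ is cyclically reduced of length $\sum_i\abs{m_i}\geq 2n\geq 4$.

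The heart of the argument is the computation of the Whitehead graph $\Gamma$ of $w$: its vertices are the $2n$ symbols $e_i^{\pm1}$, and for every pair of cyclically consecutive letters $xy$ of $w$ one places an edge between $x^{-1}$ and $y$. The block $e_i^{m_i}$ contributes $\abs{m_i}-1\geq 1$ (parallel) edges joining $e_i^{\epsilon_i}$ to $e_i^{-\epsilon_i}$, while each junction between consecutive blocks contributes one edge joining $e_i^{-\epsilon_i}$ to $e_{i+1}^{\epsilon_{i+1}}$ (indices modulo $n$). Tracing these edges shows that, up to parallel edges, $\Gamma$ is a single $2n$-cycle
\[ e_1^{\epsilon_1}- e_1^{-\epsilon_1}- e_2^{\epsilon_2}- e_2^{-\epsilon_2}-\cdots - e_n^{\epsilon_n}- e_n^{-\epsilon_n}- e_1^{\epsilon_1}, \]
so that $\Gamma$ is connected and has no cut vertex. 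It is precisely the hypothesis $\abs{m_i}\geq 2$ that guarantees the presence of every internal edge $e_i^{\epsilon_i}- e_i^{-\epsilon_i}$; dropping one of them, which happens exactly when some $\abs{m_i}=1$ (as in the genuinely primitive word $e_1^{2}e_2$), would break the cycle and create a cut vertex, so the combinatorial criterion below would no longer apply.

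I would then invoke Whitehead's classical cut-vertex criterion: a cyclically reduced word whose Whitehead graph is connected and has no cut vertex is of minimal length in its $\Aut(\F_n)$-orbit. Since $\abs{w}\geq 4>1$, the element $w$ cannot be $\Aut(\F_n)$-equivalent to a generator, and is therefore not primitive. The main obstacle, and the only non-elementary input, is the correct invocation of this criterion together with the bookkeeping of the Whitehead graph conventions; everything else reduces to the purely combinatorial observation that the constraint $\abs{m_i}\neq 1$ turns the Whitehead graph of $w$ into a single cycle.
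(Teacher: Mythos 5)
Your argument is correct, but there is nothing in the paper to compare it against: the authors state Fact \ref{nonprimitive} without proof, recalling it as a classical result (just as Fact \ref{primitive} is dispatched by an appeal to the Kurosh subgroup theorem). Your Whitehead-graph proof is the standard justification, and the details check out: the reduction of vanishing exponents to a free factor via Fact \ref{primitive} is legitimate; for $n\geq 2$ and all $\abs{m_i}\geq 2$ the word $w$ is cyclically reduced and its Whitehead graph is indeed the $2n$-cycle you describe, with the internal edge $e_i^{\epsilon_i}$--$e_i^{-\epsilon_i}$ carried with multiplicity $\abs{m_i}-1$; and a cycle on $2n\geq 4$ vertices is connected without cut vertices (parallel edges being irrelevant for cut vertices, all you really use from $\abs{m_i}\geq 2$ is the presence of one internal edge per generator, exactly the point your $e_1^2e_2$ example isolates). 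One remark on the key external input: you could shorten the endgame by quoting the cut vertex lemma in its usual contrapositive form --- the Whitehead graph of a cyclically reduced primitive word of length at least $2$ is disconnected or has a cut vertex (Whitehead; see also Stallings, ``Whitehead graphs on handlebodies'') --- which applied to your $\Gamma$ yields non-primitivity at once; the minimality statement you invoke (connected plus no cut vertex implies Whitehead-minimal) is also a true classical theorem and gives the same conclusion via $\abs{w}\geq 4>1$, so either phrasing is fine. In sum: your proof is complete modulo a correctly quoted classical criterion, and it supplies an argument where the paper offers only a citation-free recollection.
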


With Fact \ref{primitive} and Fact \ref{nonprimitive} at hand, we now can prove:

\begin{lemma}\label{lem:phigeneric}
For any pair $(a,b)$ which is part of some 
basis of $\F_{\omega}$ we have $\F_{\omega}\models\nephi(a,b)$.
\end{lemma}
\begin{proof}
It is enough to prove that $e_1e_2$ is not a product of a fifth and a fourth power of any two elements of $\F_{\omega}$ that do not commute. 
Suppose otherwise that there are $u,v$ two elements of $\F_{\omega}$ such that $u^5v^4=e_1e_2$ and $[u,v]\neq 1$. Recall 
that any two elements that do not commute generate a free subgroup of rank $2$. Fact \ref{primitive} yields that $e_1e_2$ is a primitive element of $\langle u, v\rangle$. On the other hand since $u^5v^4=e_1e_2$, we get, by Fact \ref{nonprimitive}, that the element $e_1e_2$ is not primitive in $\langle u, v\rangle$, 
a contradiction. 
\end{proof}

We will use Proposition \ref{prop:criterion} in order to prove that the theory of the free group is nonequational.  
Therefore, for arbitrary $n\in \N$, consider the following matrices:
 \begin{displaymath}
\mathbf{A_{n}} =
\left( \begin{array}{cccc}
e_2^5e_1 & e_3^5e_1 & \ldots & e_{n+1}^5e_1 \\
e_3^5e_2 & e_4^5e_2 & \ldots & e_{n+2}^5e_2 \\
\vdots & \vdots & \ddots \\
e_{n+1}^5e_n & e_{n+2}^5e_n & \ldots & e_{2n}^5e_n
\end{array} \right), \ \ 
\mathbf{B_{n}} =
\left( \begin{array}{cccc}
e_1^{-1}e_2^{-4} & e_1^{-1}e_3^{-4} & \ldots & e_1^{-1}e_{n+1}^{-4} \\
e_2^{-1}e_3^{-4} & e_2^{-1}e_4^{-4} & \ldots & e_2^{-1}e_{n+2}^{-4} \\
\vdots & \vdots & \ddots \\
e_n^{-1}e_{n+1}^{-4} & e_n^{-1}e_{n+2}^{-4} & \ldots & e_n^{-1}e_{2n}^{-4}
\end{array} \right)
\end{displaymath}

We will see that $\nephi(x,y)$ together with the matrices $A_n,B_n$ satisfy the hypotheses of Proposition \ref{prop:criterion}.

\begin{lemma}\label{lem:entriesindependent}
Let $A_n=(a_{ij})$ and $B_n=(b_{kl})$ be the matrices given above. 
If $i\neq k$ or $(i,j)=(k,l)$, then $a_{ij}$ and $b_{kl}$ form part of a basis of $\F_{\omega}$. 
\end{lemma}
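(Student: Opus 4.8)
The plan is to read off the general form of the entries, namely $a_{ij}=e_{i+j}^5e_i$ and $b_{kl}=e_k^{-1}e_{k+l}^{-4}$, and then in each case exhibit the pair as part of a basis by working inside the finitely generated free factor $H:=\langle e_m : m\in S\rangle$ of $\F_\omega$, where $S$ is the (at most four element) set of indices occurring in $a_{ij}$ and $b_{kl}$. Two standard facts will be used throughout: elementary Nielsen transformations send a basis of a free group to a basis, and a basis of a free factor of $\F_\omega$ extends to a basis of $\F_\omega$, since $\F_\omega=H*\langle e_m : m\notin S\rangle$ and a union of bases of the free factors is a basis of the free product.

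I would first dispose of the diagonal case $(i,j)=(k,l)$ (which forces $i=k$). Here a direct computation gives $a_{ij}b_{ij}=e_{i+j}^5e_ie_i^{-1}e_{i+j}^{-4}=e_{i+j}$, and then $e_i=e_{i+j}^{-5}a_{ij}=(a_{ij}b_{ij})^{-5}a_{ij}$. Hence $a_{ij}$ and $b_{ij}$ generate the rank-two free factor $\langle e_i,e_{i+j}\rangle$; being a two element generating set of a free group of rank two, they form a basis of it by the Hopfian property, and this basis extends to one of $\F_\omega$.

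For the main case $i\neq k$, I would set $p:=i+j$ and $q:=k+l$, so that $a_{ij}=e_p^5e_i$ and $b_{kl}=e_k^{-1}e_q^{-4}$, and record the constraints $p>i$, $q>k$, $i\neq k$. The idea is that $a_{ij}$ is obtained from the standard generator $e_i$ by the elementary move $e_i\mapsto e_p^5e_i$, and $b_{kl}$ from $e_k$ by $e_k\mapsto e_k^{-1}\mapsto e_k^{-1}e_q^{-4}$; since $i\neq k$ these act on different slots of the standard basis of $H$, and I only need to order them so that the slot multiplied in when forming one element is still untouched. The crucial combinatorial observation is that among the three possible coincidences $p=k$, $p=q$, $i=q$, no two can hold simultaneously: each pair forces a contradiction with $p>i$, $q>k$ (for instance $p=k$ together with $i=q$ would give $i=q=k+l=p+l>p>i$, and $p=q$ would give $k=q>k$). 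Thus at most one coincidence occurs, so $S$ has four or three elements.

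In each of the four resulting situations (all four indices distinct; $p=k$; $p=q$; $i=q$) I would write down the explicit short sequence of Nielsen moves, always performing first the transformation whose multiplier index equals the other element's base index, turning the standard basis of $H$ into a basis containing both $a_{ij}$ and $b_{kl}$; the free factor fact then finishes the argument. The main obstacle is precisely ruling out the circular configuration $p=k$ and $q=i$, in which the two transformations would block each other. This step is essential rather than cosmetic: in that excluded configuration the abelianized images would be $(1,5)$ and $(-4,-1)$ with determinant $19$, so the pair would genuinely fail to be part of a basis. Once the incompatibility of coincidences is in hand, the remaining case analysis is routine bookkeeping of Nielsen moves.
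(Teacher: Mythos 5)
Your proposal is correct, and it rests on the same underlying mechanism as the paper's proof --- reduce to a finitely generated free factor of $\F_\omega$ and show that the pair, together with suitably chosen standard generators, is a basis of it --- but you execute this more constructively. The paper's argument is uniform: for $i\neq k$ it enlarges $\{i,k\}$ by a subset $S\subseteq\{i+j,k+l\}$ of maximal size keeping all indices pairwise distinct, and then argues by rank comparison that $\{e_s\mid s\in S\}\cup\{a_{ij},b_{kl}\}$ generates a subgroup containing the equinumerous partial basis $\{e_i,e_k\}\cup\{e_s\mid s\in S\}$, hence (free groups being Hopfian) is itself a basis of that free factor; your diagonal computation $a_{ij}b_{ij}=e_{i+j}$ and $e_i=(a_{ij}b_{ij})^{-5}a_{ij}$ is exactly the paper's. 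What you do differently in the case $i\neq k$ is to make the coincidence pattern among $i,k,p=i+j,q=k+l$ explicit --- at most one of $p=k$, $p=q$, $i=q$ can hold, since any two contradict $p>i$, $q>k$ --- and then to realize the pair in a basis via ordered Nielsen moves; I checked that your ordering rule (perform first the move whose multiplier index is the other element's base index) succeeds in all four configurations. The paper's maximal-$S$ device silently absorbs the same case distinctions into the short verification that the generated subgroup recovers $e_i$ and $e_k$, so the combinatorial content is identical; your version buys an explicit basis and an instructive remark the paper omits, namely that in the blocked circular configuration $p=k$, $q=i$ the words $e_k^5e_i$ and $e_k^{-1}e_i^{-4}$ abelianize with determinant $19$ and genuinely fail to be part of a basis --- though note that this last point tacitly uses a Kurosh-type argument in the spirit of Fact \ref{primitive} to pass from ``part of a basis of $\F_\omega$'' to ``basis of the rank-two free factor,'' and that the configuration is in any case impossible under $p>i$, $q>k$. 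The cost of your route is a four-case bookkeeping that the paper compresses into one sentence; both arguments are complete and correct.
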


\begin{proof}   
Consider first $a_{ij}\in A_n$ and $b_{kl}\in B_n$ arbitrary with $i\neq k$. Then
\begin{equation*}
 a_{ij}=e_{i+j}^5e_i \text{ and } b_{kl}=e_{k}^{-1}e_{k+l}^{-4}.
\end{equation*}
Consider the sets $A:=\{i,k,i+j, k+l\}$ and $S:=A\setminus\{i,k\}$. 
Then the set $\{e_s\mid s\in S\}\cup\{a_{ij},b_{kl}\}$ is part of a basis, 
as the subgroup it generates contains the following part of a basis $\{e_i,e_k\}\cup\{e_s\mid s\in S\}$ which has the same size. 

If $(i,j)=(k,l)$, then the set $\{a_{ij},b_{ij}\}=\{e_{i+j}^5e_i,e_{i}^{-1}e_{i+j}^{-4}\}$ forms a basis of $\F_2$, 
as the subgroup it generates contains the following part of a basis $\{e_i,e_{i+j}\}$ which has the same size. 
\end{proof}

\begin{lemma}\label{lem:not2equational}
Let $A_n=(a_{ij})$ and $B_n=(b_{kl})$ be the matrices given above. Then $\lnot\nephi$ is satisfied in 
$\F_{\omega}$ by any pair $(a_{ij},b_{kl})$ if $i=k$ and $j\neq l$. 
\end{lemma}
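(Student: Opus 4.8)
The plan is to unwind the negation and simply exhibit explicit witnesses. Recall that $\lnot\nephi(x,y)$ asserts precisely that there exist $u,v$ with $[u,v]\neq 1$ and $xy=u^5v^4$. So for the case at hand, where $k=i$ and $j\neq l$, it suffices to produce such a pair for $x=a_{ij}$ and $y=b_{il}$. First I would compute the relevant product, observing that the two inner factors cancel:
\[
a_{ij}\cdot b_{il}=e_{i+j}^5\,e_i\cdot e_i^{-1}\,e_{i+l}^{-4}=e_{i+j}^5e_{i+l}^{-4}.
\]
This already displays $xy$ as a fifth power times a fourth power, which is exactly the shape demanded by $\lnot\nephi$.

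Next I would read off the witnesses directly from this expression: set $u:=e_{i+j}$ and $v:=e_{i+l}^{-1}$. Then $u^5v^4=e_{i+j}^5e_{i+l}^{-4}=a_{ij}b_{il}$, so the equation $xy=u^5v^4$ holds. It remains only to check the noncommutation clause, and this is the single step where the hypothesis $j\neq l$ enters: since $j\neq l$ forces $i+j\neq i+l$, the elements $u$ and $v$ are (inverse) powers of two \emph{distinct} basis elements of $\F_{\omega}$, which generate a free subgroup of rank $2$, so $[u,v]\neq 1$. Hence $\lnot\nephi(a_{ij},b_{il})$ holds in $\F_{\omega}$, as required.

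The main (indeed the only) obstacle is verifying noncommutation; everything else is formal cancellation. This is also the conceptual heart of the whole construction: the matrix entries were engineered precisely so that within a single row the middle letters cancel and one is left with a genuine fifth-times-fourth-power, whereas off the diagonal (the case $i\neq k$) no such cancellation occurs and Lemma \ref{lem:entriesindependent} instead guarantees that the entries extend to a basis. Together with Lemma \ref{lem:entriesindependent} and Lemma \ref{lem:phigeneric}, this lemma supplies hypothesis (ii) of Proposition \ref{prop:criterion}, which is what drives the nonequationality argument.
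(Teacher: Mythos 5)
Your proof is correct and follows exactly the paper's argument: compute the cancellation $a_{ij}b_{il}=e_{i+j}^5e_{i+l}^{-4}$ and take the witnesses $u=e_{i+j}$, $v=e_{i+l}^{-1}$, which do not commute since $j\neq l$. Your version only adds the (welcome) explicit justification that distinct basis elements generate a free group of rank $2$, so there is nothing to change.
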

\begin{proof}
Consider $a_{ij}\in A_n$ and $b_{il}\in B_n$ arbitrary for $j\neq l$. Then 
\begin{equation*}
  a_{ij}b_{kl}=e_{i+j}^5e_ie_{i}^{-1}e_{i+l}^{-4}=e_{i+j}^5e_{i+l}^{-4}.
\end{equation*}
Cleary $u=e_{i+j}$ and $v=e^{-1}_{i+l}$ do not commute for $j\neq l$, whence 
$\F_{\omega}\models\lnot\nephi(a_{ij},b_{il})$, as desired.
\end{proof}

We can now prove the nonequationality of the free group.

\begin{theorem}
The theory of the free group is nonequational. 
\end{theorem}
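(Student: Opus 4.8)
The plan is to deduce the theorem directly from Proposition \ref{prop:criterion}, taking $\varphi(x,y)$ to be $\nephi(x,y)$ and taking the arbitrarily large matrices $A_n, B_n$ to be exactly those displayed above. Since all nonabelian free groups share the same first-order theory, and by Fact \ref{fact:selafree} (equivalently, via the elementary chain $\F_2 \prec \F_3 \prec \cdots$) that theory is $\Th(\F_{\omega})$, it suffices to produce the nonequationality witnesses inside $\F_{\omega}$. I would therefore work in $\F_{\omega}$ throughout and verify the two hypotheses (i) and (ii) of the proposition for this choice of formula and matrices; the three lemmas just established are precisely calibrated to do this.

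First I would check hypothesis (ii), which asserts that $\nephi$ holds on the pair of entries $(a_{ij}, b_{kl})$ if and only if $i \neq k$ or $(i,j) = (k,l)$. For the indices in the ``satisfied'' range, namely $i \neq k$ or $(i,j) = (k,l)$, Lemma \ref{lem:entriesindependent} shows that $a_{ij}$ and $b_{kl}$ jointly form part of a basis of $\F_{\omega}$, and then Lemma \ref{lem:phigeneric} yields $\F_{\omega} \models \nephi(a_{ij}, b_{kl})$. For the complementary range $i = k$ with $j \neq l$, Lemma \ref{lem:not2equational} exhibits the non-commuting witnesses $u = e_{i+j}$ and $v = e_{i+l}^{-1}$, giving $\F_{\omega} \models \lnot\nephi(a_{ij}, b_{kl})$. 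These two statements combine to give exactly the biconditional required in (ii).

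For hypothesis (i) I would again invoke Lemma \ref{lem:entriesindependent}: for every pair of indices with $i \neq k$ or $(i,j) = (k,l)$, the entries $a_{ij}$ and $b_{kl}$ extend to a basis of $\F_{\omega}$. The point is then that being part of a basis is a type-determined condition, since any two pairs that each extend to a basis of $\F_{\omega}$ lie in a single orbit of $\Aut(\F_{\omega})$ --- one merely extends a bijection of the two bases to an automorphism. Consequently all the relevant pairs realize one and the same type, which establishes (i). With both hypotheses in hand, Proposition \ref{prop:criterion} immediately gives that $\nephi$ is not a boolean combination of equations, so $\Th(\F_{\omega})$, and hence the theory of the free group, is nonequational; in fact the argument shows it is not even $1$-equational.

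Once the three lemmas are available the assembly is essentially bookkeeping, so I expect the genuine difficulty to lie elsewhere. The substantive content sits in Lemma \ref{lem:phigeneric}, where Facts \ref{primitive} and \ref{nonprimitive} are used to rule out writing a primitive element as $u^5 v^4$ with $[u,v] \neq 1$; and in hypothesis (i), where one must be certain that ``part of a basis'' really pins down a single type, i.e.\ that $\Aut(\F_{\omega})$ acts transitively on basis-pairs. I anticipate that homogeneity step to be the place demanding the most care in the write-up, even though it reduces to the standard fact that any bijection between bases of $\F_{\omega}$ extends to an automorphism.
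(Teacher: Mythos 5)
Your proposal is correct and follows essentially the same route as the paper's own proof: both verify the two hypotheses of Proposition \ref{prop:criterion} for $\nephi$ and the matrices $A_n, B_n$, using Lemma \ref{lem:entriesindependent} together with Lemma \ref{lem:phigeneric} for one direction of (ii), Lemma \ref{lem:not2equational} for the other, and Lemma \ref{lem:entriesindependent} for (i). The only difference is cosmetic: you spell out the homogeneity step (that $\Aut(\F_{\omega})$ acts transitively on pairs extending to a basis, so all relevant pairs realize $\tp(e_1,e_2)$), which the paper leaves implicit.
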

\begin{proof} 
We confirm that the hypotheses of Proposition \ref{prop:criterion} are satisfied for the first-order formula $\nephi(x,y)$ 
and the matrices $A_n$, $B_n$ given above. Indeed, by Lemma \ref{lem:entriesindependent} the pairs 
$(a_{ij},b_{kl})$ for $i\neq k$ and for $(i,j)=(k,l)$ all satisfy the same type, namely the type $tp(e_1,e_2)$. Thus, the 
first condition of Proposition \ref{prop:criterion} holds. 

For the second condition, we need to prove that $\F_{\omega}\models\nephi(a_{ij},b_{kl})$ if and only if 
$i\neq k$ or $(i,j)=(k,l)$. The right to left direction follows from lemmata \ref{lem:phigeneric} and \ref{lem:entriesindependent}, while the other 
direction is Lemma \ref{lem:not2equational}.
\end{proof}

\begin{remark}
Any formula $\psi_{m}(x,y):= \forall u,v ([u,v]\neq 1\rightarrow xy\neq u^mv^{m-1})$, for $m>2$, works with  essentially the same proof and witnesses that the free group is nonequational. 

We arbitrarily chosen $m=5$, as it seems to be the smallest number that makes the proof of Lemma \ref{lem:phigeneric2}, in the next section, (easily) work. As matter of fact, Lemma \ref{lem:phigeneric2} also works for any $\psi_m(x,y)$ with $m>4$ with similar proof.  
\end{remark}

\section{Free Products of Groups}\label{sec:FreeProducts}
In this section we generalize the main result of Section \ref{sec:freegroup} 
to the first-order theory of any free product of groups of the form  $G*\F_\omega$. 

\begin{theorem}\label{FreeProducts}
The first-order theory of the free product of groups $G*\F_\omega$ is nonequational.
\end{theorem}

In order to prove Theorem \ref{FreeProducts}, we only need to show  that Lemma \ref{lem:phigeneric} is still valid for the group $G*\F_\omega$. We will freely use some Bass-Serre theory (\cite{MR1954121}) and properties of isometric actions (\cite[Chapter 3]{MR1851337}). For the unfamiliar reader we collect next some useful results:

\begin{fact}\label{BassSerre}
Let $A, B$ be nontrivial groups. The group $A*B$ acts by simplicial automorphisms without inversions on a simplicial tree, say $T$, and the action has the following properties:
\begin{itemize}
\item Vertices of the tree are in correspondence with $(A*B)/A\sqcup (A*B)/B$. Moreover, each vertex is stabilized either by a conjugate of $A$ or by a conjugate of $B$ (\cite[Chapter I, Theorem 7]{MR1954121}). 
\item Every edge is trivially stabilized (\cite[Chapter I, Theorem 7]{MR1954121}). 
\item Every nontrivial element $g$ is either elliptic, i.e. it fixes a unique point in the tree or it is hyperbolic, i.e. there exists a unique line $Ax(g)$, called the axis of the element, on which it acts by translation (see \cite[Chapter 3, Theorem 1.4]{MR1851337}). By a line we mean a bi-infinite path (without backtracking) (see \cite[Chapter I, Definition 2]{MR1954121}).
\item If $g$ is hyperbolic, then $g^m$, for any integer $m\neq 0$, is hyperbolic too, moreover $Ax(g)=Ax(g^m)$ and $tr(g^m)=|m|\cdot tr(g)$ (see \cite[Chapter 3, Lemma 1.7(3)]{MR1851337}). By $tr(g)$ we denote the translation length which is defined by $min\{d_T(x,g.x) \ : \ x\in V(T)\}$, where $V(T)$ is the set of vertices of the tree $T$ and $d_T(x,y)$ is the (integer) distance between the vertices $x,y$ given by the length of the (unique) shortest path between them.    
\item If $g, h$ are both elliptic and they fix distinct vertices, then $gh$ is hyperbolic (\cite[Chapter 3, Lemma 2.2]{MR1851337}).
\item If $g,h$ are both hyperbolic and either:
\begin{itemize}
    \item[(i)] their axes do not meet; or
    \item[(ii)] their axes meet coherently, i.e. they intersect nontrivially and either they meet at a vertex or they translate in the same direction along their intersection.
   \end{itemize}
 Then $gh$ is hyperbolic as well (see \cite[Chapter 3, Lemma 2.2]{MR1851337} for (i), \cite[Chapter 3, Lemma 3.1]{MR1851337} for (ii)).
\item If $g,h$ are both hyperbolic, their axes meet incoherently and $|Ax(g)\cap Ax(h)|< min \{tr(g), tr(h)\}$, then $tr(gh)=tr(g)+tr(h)-2|Ax(g)\cap Ax(h)|$ (see \cite[Chapter 3, Lemma 3.3(2)]{MR1851337}). By $|Ax(g)\cap Ax(h)|$ we denote the length of the path $Ax(g)\cap Ax(h)$ when the path is finite or $\infty$ (which is larger than any natural number) when it is not.  
\item If $g,h$ are both hyperbolic, their axes meet incoherently and $|Ax(g)\cap Ax(h)|> min \{tr(g), tr(h)\}$, then, assuming that $tr(h)\leq tr(g)$,  $tr(gh)=tr(g)-tr(h)$ (see \cite[Chapter 3, Lemma 3.4(2)]{MR1851337}).
\item If $g,h$ are both hyperbolic, their axes meet incoherently, $|Ax(g)\cap Ax(h)|= min \{tr(g),$ $tr(h)\}$, and $tr(gh)=0$, then $A(gh)\cap Ax(g)$ is a single vertex, where $A(gh)$ denotes the fix point set of $gh$ (see \cite[Chapter 3, Lemma 3.5(3)]{MR1851337}). In particular, the vertex fixed by $gh$ lies in the axis of $g$.
\end{itemize}
\end{fact}

For convenience of notation, if $x$ is a vertex in $T$ and $S$ is a subtree we will denote by $pr_{S}(x)$ the unique vertex in $S$ that minimizes the distance between $x$ and $S$. In addition, if $x,y$ are vertices in $T$ we will denote by $[x,y]$ the (unique) shortest path connecting them. We will sometimes refer to it as the geodesic segment between $x$ and $y$. 

It is also worth noting that the action respects distances. We are now ready to prove:

\begin{lemma}\label{lem:phigeneric2}
For any pair $(a,b)$ which is part of some basis of $\F_{\omega}$ we have that $G*\F_\omega\models\nephi(a,b)$.
\end{lemma}
\begin{proof}
It will be enough to prove that $e_1e_2$ is not a product of a fifth and a fourth power of elements in $G*\F_{\omega}$ that do not commute. We may assume that $G$ is nontrivial. We consider the Bass-Serre tree that corresponds to the free splitting $G*\F_{\omega}$.

Suppose, for a contradiction, that we can find $u$ and $v$ in $G*\F_{\omega}$ such that $[u,v]\neq 1$ and $u^5v^4=e_1e_2$. We take cases for $u$ (which is necessarily nontrivial):
\begin{itemize}
    \item Suppose $u$ is elliptic. Then $v$ must be elliptic as well. Indeed, suppose, for a contradiction, that $v$ is hyperbolic and $*$ is the vertex stabilized by $\F_{\omega}$. Then $u^5v^4$ must fix this vertex, but since $v^4$ acts hyperbolically, the geodesic connecting $u^5v^4.*$ and $v^4.*$ consists of the concatenation of the following geodesic segments (see \cite[Chapter 3, Theorem 1.4]{MR1851337}) 
    $$[*, pr_{Ax(v)}(*)]\sqcup[pr_{Ax(v)}(*),v^4.pr_{Ax(v)}(*)]\sqcup [v^4.pr_{Ax(v)}(*), v^4.*] $$
    Since $u^5v^4.*=*$ the (unique) vertex fixed by $u$, say $x$, must lie in the middle of the above geodesic segment and since the lengths of the two side paths, i.e. $[*, pr_{Ax(v)}(*)]$ and $[v^4.pr_{Ax(v)}(*), v^4.*]$ are equal, it must lie in the middle of $[pr_{Ax(v)}(*),$ $v^4.pr_{Ax(v)}(*)]$. Moreover, since $u^5$ acts as a reflection through $x$ sending $[v^2.pr_{Ax(v)}(*), v^4.pr_{Ax(v)}(v)]$ to $[v^2.pr_{Ax(v)}(*), *]$, we have $u^{5}.v^4.pr_{Ax(v)}(*)=pr_{Ax(v)}(*)$. In particular, $u^5.v^4$ fixes $[*,pr_{Ax(v)}(*)]$ (see Figure \ref{Elliptic-elliptic}). Thus, $*=pr_{Ax(v)}(*)$ and consequently 
    $*$ belongs to the axis of $v$. Hence, $u^5$ belongs to $v^2\F_\omega v^{-2}$ (and also $u$, since $u^5$ is nontrivial) and therefore $v^2\gamma^5 v^{-2}v^4=v^2\gamma^5 v^2=e_1e_2$ for some nontrivial $\gamma\in\F_\omega$. Now,  we claim that $e_1e_2.v.*\neq \gamma^5.v.*$. Indeed, otherwise $(e_1e_2)^{-1}\gamma^5$ would fix $v.*$, but it also fixes $*\neq v.*$ and since it is not trivial it cannot fix a segment. We notice that since $\gamma^5$ is equal to $v^{-2}u^5v^{2}$, it moves $v^2.*$ to $v^{-2}.*$ while fixing $*$. Hence, $\gamma^5$ must move $v.*$ to $v^{-1}.*$. Thus, $e_1e_2$ moves $v.*$ out of the axis of $v$ and therefore $e_1e_2.v^{-2}$ moves $v.*$ out of the axis as well. Indeed, if not, then $e_1e_2$ moves $v^{-1}.*$ to $v.*$, but then $\gamma^5.e_1e_2$ fixes $v^{-1}.*$, a contradiction. On the other hand,  $v^2\gamma^5$ fixes $v.*$, hence $v^2\gamma^5 v^2\neq e_1e_2$.     
    \begin{figure}[ht!]
    \centering
    \includegraphics[width=.6\textwidth]{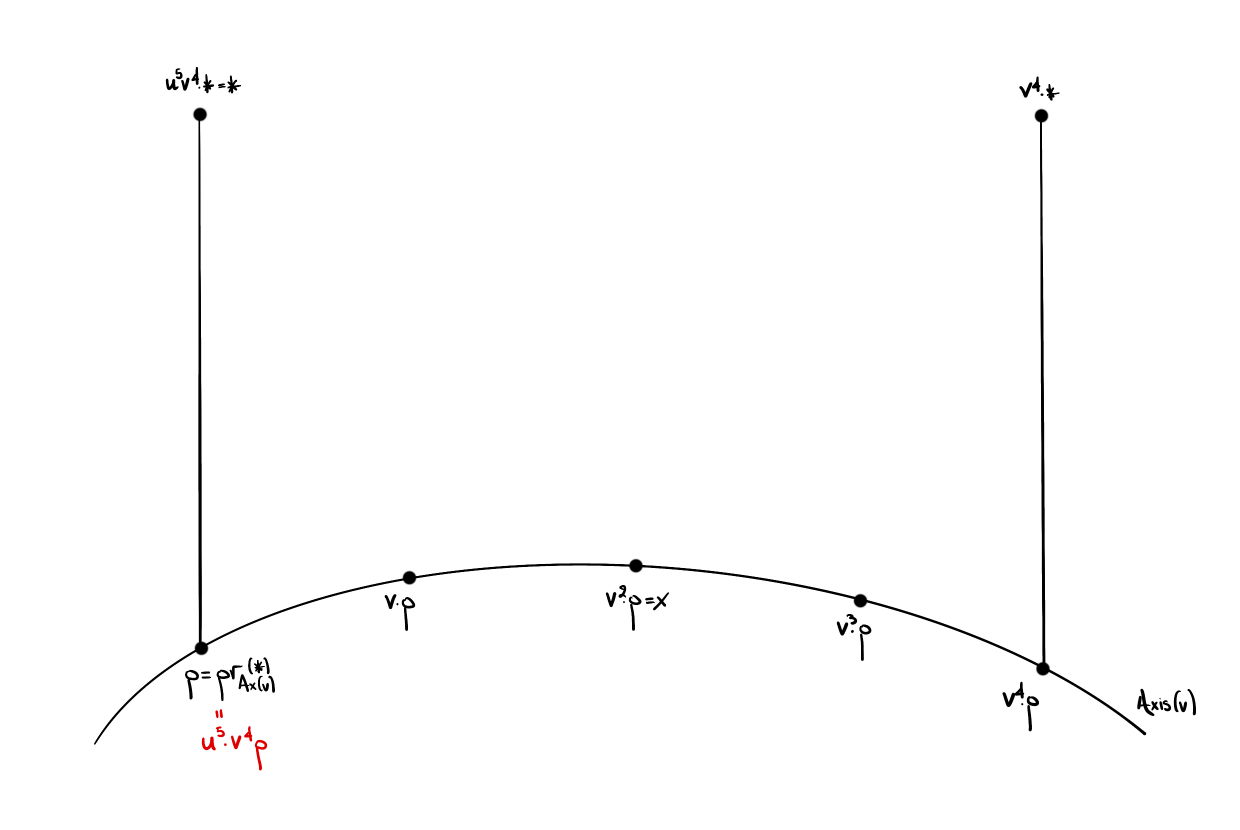}
    \caption{The elliptic case.}
    \label{Elliptic-elliptic}
    \end{figure}
    
    Since, in this case, both $u$ and $v$ are elliptic, they must fix the same vertex, otherwise $u^5v^4$ would be hyperbolic. And, finally, since $u^5v^4$ belongs to $\F_{\omega}$, the common fixed vertex must be the vertex stabilized by $\F_{\omega}$. But this contradicts Lemma \ref{lem:phigeneric}. 
    \item Suppose $u$ is hyperbolic. Then $v$ must be hyperbolic as well. Indeed, suppose for a contradiction that $v$ is elliptic, then a similar argument to the argument of the previous case gives that $*$ (the vertex stabilized by $\F_{\omega}$) lies in the axis of $u$ and $v^4$ acts as a reflection through the midpoint of $[*, u^{-5}.*]$. In particular,  $v^4u^3$ fixes $u^{-4}.*$. Hence, $v^4u^3=u^{-4}\gamma u^4$, for some nontrivial $\gamma\in\F_{\omega}$. Since $\gamma$ is equal to $u^4v^4u^{-1}$, it moves $u^{-1}.*$ to $u.*$ while fixing $*$. This is because $v^4$ reflects $u^{-2}.*$ to $u^{-3}.*$. But, on the other hand, $\gamma$ also moves $u.*$ to $u^{-1}.*$. Therefore, $\gamma^2$ fixes both $*$ and $u.*$, thus it is trivial and consequently $\gamma$ is trivial, a contradiction.
    
    Therefore, both $u$ and $v$ are hyperbolic. In this case, the two axes, $Ax(v)$ and $Ax(u)$, must meet incoherently. We take cases according to how the length of their intersection compares with the minimum of translation lengths. 
    \begin{itemize}
        \item[(i)] Suppose $|Ax(u)\cap Ax(v)|>min\{tr(u^5), tr(v^4)\}$. If $tr(v^4)<tr(u^5)$, then $tr(u^5v^4)=tr(u^5)-tr(v^4)>0$ and consequently $u^5v^4$ is hyperbolic. 
        
        If $tr(u^5)<tr(v^4)$, then, symmetrically, $tr(v^{-4}u^{-5})=tr(v^4)-tr(u^5)>0$, hence  $v^{-4}u^{-5}$ is hyperbolic, which implies that $u^{5}v^4$ is hyperbolic as well. 
        
        The only case left is when $tr(u^5)=tr(v^4)$. In this case $[pr_{Ax(v)}(*), v^4.pr_{Ax(v)}(*)]$ must be contained in the intersection $Ax(u)\cap Ax(v)$ (see Figure \ref{Hyperbolic-Hyberbolic2}). In particular, there is an edge, the first edge between $v^2.pr_{Ax(v)}(*)$ and $v^3.pr_{Ax(v)}(*)$, that is fixed by $[u,v]$. Hence $[u,v]=1$, a contradiction.

    \begin{figure}[ht!]
    \centering
    \includegraphics[width=.8\textwidth]{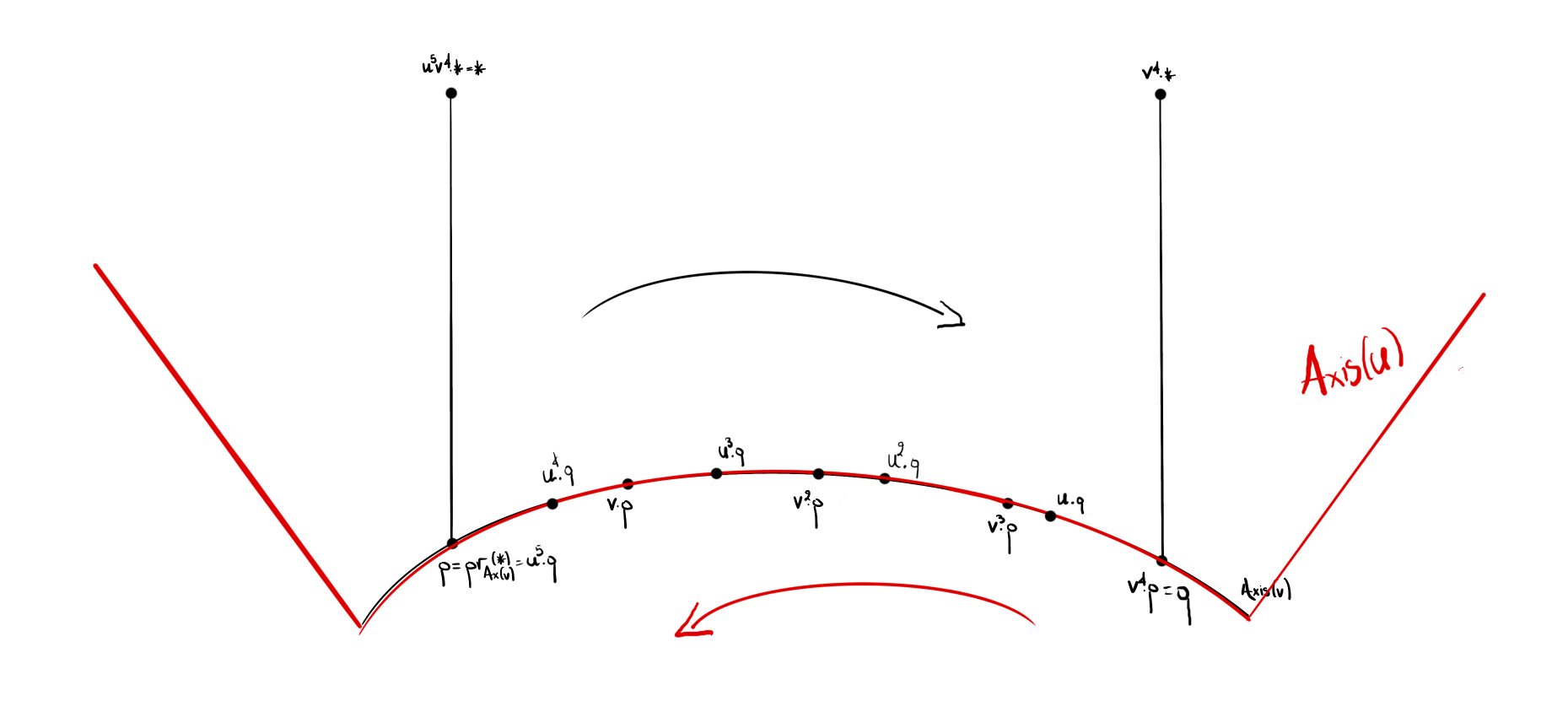}
    \caption{The hyperbolic case when $|Ax(u)\cap Ax(v)|>min\{tr(u^5),tr(v^4)\}$.}
    \label{Hyperbolic-Hyberbolic2}
    \end{figure}
        
        \item[(ii)] Suppose $|Ax(u)\cap Ax(v)|< min\{tr(u^5), tr(v^4)\}$. Then,  $tr(u^5v^4)=tr(u^5)+tr(v^4)-2|Ax(u)\cap Ax(v)|>0$. Consequently,  $u^5v^4$ is hyperbolic. 

        \item[(iii)] Suppose $|Ax(u)\cap Ax(v)|=min\{tr(u^5),tr(v^4)\}$. We first assume that $tr(v^4)\leq tr(u^5)$ and $tr(u^5v^4)=0$. Then the unique fixed point of $u^5v^4$ must be $*$ and it lies in the axis of $u$ (see Figure \ref{Hyperbolic-Hyberbolic3}). If $u.v^4.*$ is within $[v^4.r, r]=Ax(u)\cap Ax(v)$ (and, consequently, we must have that $u^4.v^4.*$ is within the intersection of the axes), then, as before, $[u,v]$ fixes an edge that lies within the intersection (e.g. an edge starting at $u^2.v^4.*$), hence $[u,v]$ is trivial, a contradiction. If not, then $u^3.v^4.u^3.v^4$ fixes the first edge, say $e$, of the segment $[u^4.v^4.*, r]$. Thus, $(u^3v^4)^2$ is trivial. On the other hand, $u^3v^4$ fixes $u^4v^4.*$ and since the stabilizer of $*$ is $\F_{\omega}$, we get that $u^3v^4$ is a conjugate of an element of $\F_\omega$ by $u^4v^4$. In particular, since $(u^3v^4)^2$ is trivial, we must have that $(u^3v^4)$ is trivial, again a contradiction.

    \begin{figure}[ht!]
    \centering
    \includegraphics[width=.8\textwidth]{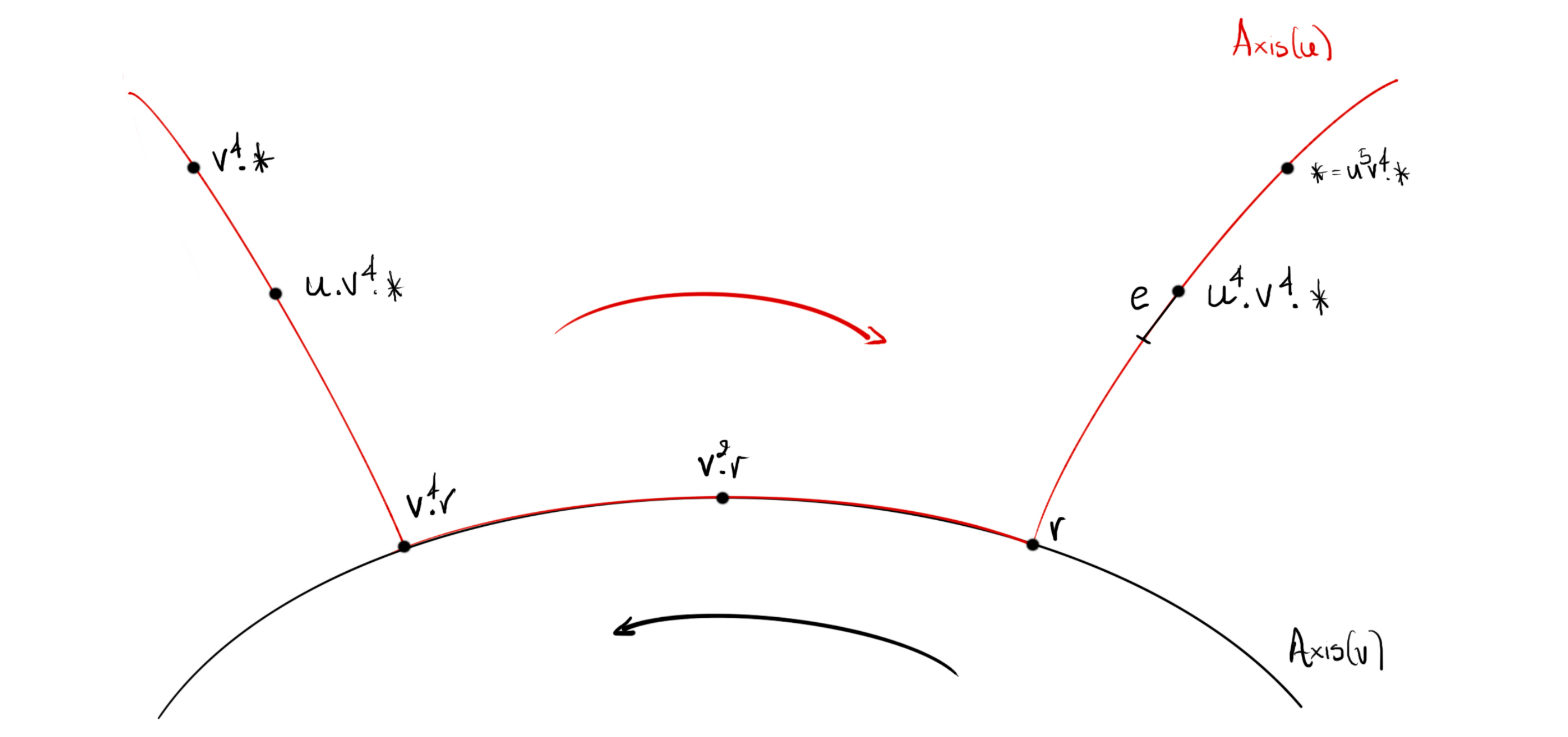}
    \caption{The hyperbolic case when $|Ax(u)\cap Ax(v)|=min\{tr(u^5),tr(v^4)\}$ and $tr(v^4)\leq tr(u^5)$.}
    \label{Hyperbolic-Hyberbolic3}
    \end{figure}

        Finally, we assume that $tr(u^5) < tr(v^4)$ and $tr(u^5v^4)=0$. This case is symmetric to the previous one. The unique fixed point of $u^5v^4$ must be $*$ and it lies in the axis of $v$. If $v.*$ belongs to $[pr_{Ax(u)}(v^4.*), u^5.pr_{Ax(u)}(v^4.*)]$, then there is an edge within this segment that is fixed by $[u,v]$, a contradiction. If not, then $v^{-2}.u^{-5}.v^{-2}.u^{-5}$ fixes the first edge of the segment $[v.*, r]$ (see Figure \ref{Hyperbolic-Hyberbolic4}). Thus, $(v^{-2}u^{-5})^2$ is trivial. On the other hand, $v^{-2}u^{-5}$ fixes $v.*$. The latter implies that $v^2u^5$ is a conjugate of an element of $\F_\omega$ by $v$. In particular, since $(v^{-2}u^{-5})^2$ is trivial, we must have that $v^{-2}u^{-5}$ is trivial, again a contradiction.

    \begin{figure}[ht!]
    \centering
    \includegraphics[width=.8\textwidth]{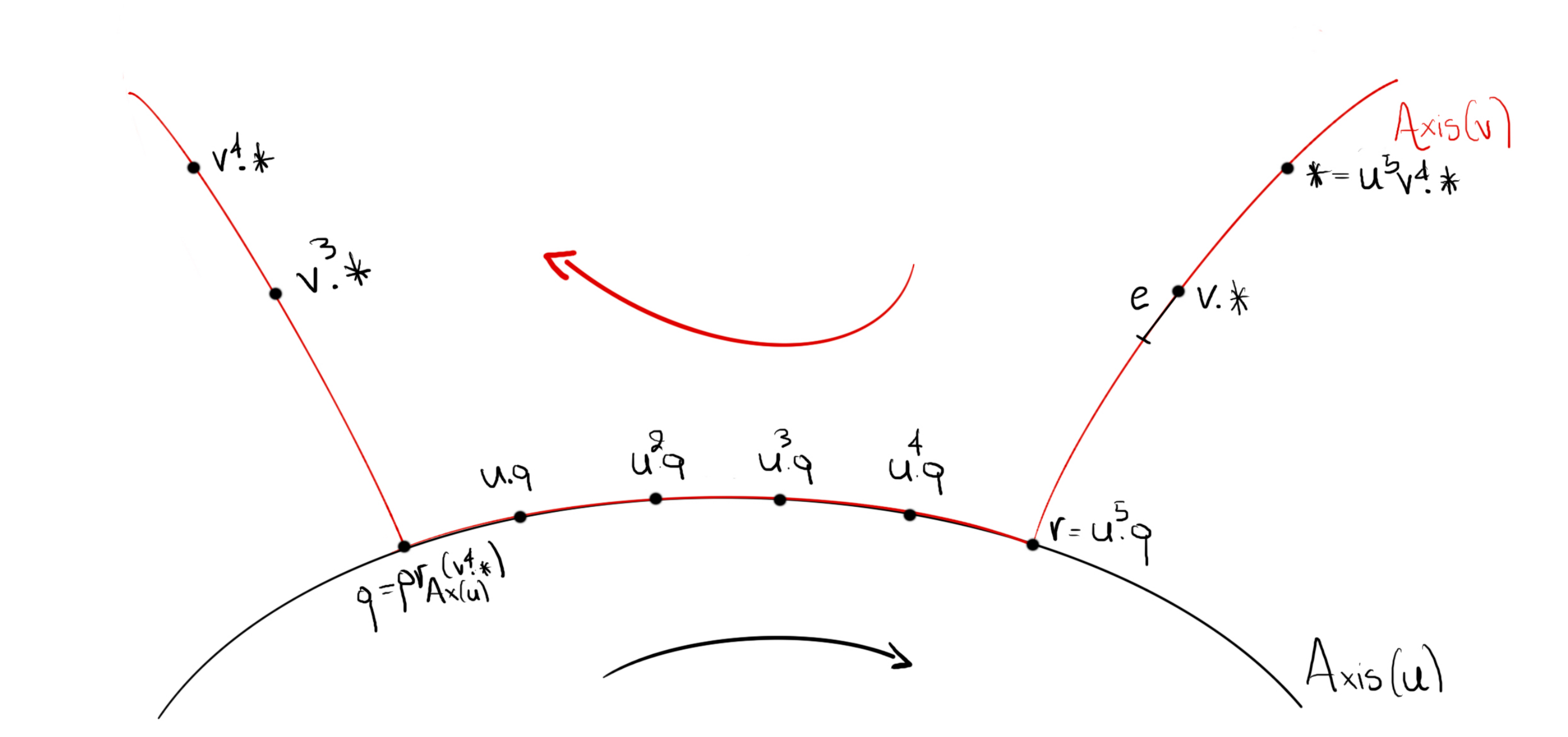}
    \caption{The hyperbolic case when $|Ax(u)\cap Ax(v)|=min\{tr(u^5),tr(v^4)\}$ and $tr(u^5)<tr(v^4)$.}
    \label{Hyperbolic-Hyberbolic4}
    \end{figure}

    \end{itemize}


  
\end{itemize}
\end{proof}
\newpage
\bibliography{biblio}

\providecommand{\bysame}{\leavevmode\hbox to3em{\hrulefill}\thinspace}
\providecommand{\MR}{\relax\ifhmode\unskip\space\fi MR }
\providecommand{\MRhref}[2]{%
  \href{http://www.ams.org/mathscinet-getitem?mr=#1}{#2}
}
\providecommand{\href}[2]{#2}
\begin{thebibliography}{KM06}

\bibitem[Chi01]{MR1851337}
Ian Chiswell, \emph{Introduction to {$\Lambda$}-trees}, World Scientific
  Publishing Co., Inc., River Edge, NJ, 2001. \MR{1851337}

\bibitem[HS89]{HrushovskiSrour}
Ehud Hrushovski and Gabriel Srour, \emph{Non-equational stable theories},
  unpublished notes, 1989.

\bibitem[KM06]{MR2293770}
Olga Kharlampovich and Alexei Myasnikov, \emph{Elementary theory of free
  non-abelian groups}, J. Algebra \textbf{302} (2006), no.~2, 451--552.
  \MR{2293770}

\bibitem[LS01]{MR1812024}
Roger~C. Lyndon and Paul~E. Schupp, \emph{Combinatorial group theory}, Classics
  in Mathematics, Springer-Verlag, Berlin, 2001, Reprint of the 1977 edition.
  \MR{1812024}

\bibitem[PS84]{MR771800}
Anand Pillay and Gabriel Srour, \emph{Closed sets and chain conditions in
  stable theories}, J. Symbolic Logic \textbf{49} (1984), no.~4, 1350--1362.
  \MR{771800}

\bibitem[Sel06]{MR2238945}
Zlil Sela, \emph{Diophantine geometry over groups. {VI}. {T}he elementary
  theory of a free group}, Geom. Funct. Anal. \textbf{16} (2006), no.~3,
  707--730. \MR{2238945}

\bibitem[Sel10]{SelaFreeProducts}
\bysame, \emph{{Diophantine geometry over groups X: The Elementary Theory of
  Free Products of Groups}}, available at
  \url{https://arxiv.org/abs/1012.0044}, 2010.

\bibitem[Sel12]{SelaEquational}
\bysame, \emph{{Free and Hyperbolic groups are not Equational}}, available at
  \url{http://arxiv.org/abs/1204.5075}, 2012.

\bibitem[Sel13]{MR3034289}
\bysame, \emph{Diophantine geometry over groups {VIII}: {S}tability}, Ann. of
  Math. (2) \textbf{177} (2013), no.~3, 787--868. \MR{3034289}

\bibitem[Ser03]{MR1954121}
Jean-Pierre Serre, \emph{Trees}, Springer Monographs in Mathematics,
  Springer-Verlag, Berlin, 2003, Translated from the French original by John
  Stillwell, Corrected 2nd printing of the 1980 English translation.
  \MR{1954121}

\bibitem[Sro84]{MR2633841}
Gabriel Srour, \emph{E{QUATIONS} {AND} {EQUATIONAL} {THEORIES}}, ProQuest LLC,
  Ann Arbor, MI, 1984, Thesis (Ph.D.)--McGill University (Canada). \MR{2633841}

\end{thebibliography}

 \end{document}